\newtheorem{theorem}{Theorem}
\theoremstyle{plain}
\numberwithin{equation}{section}
\begin{document}

\title[New Upper and Lower bounds for the Trapezoid inequality]
{New Upper and Lower bounds for the Trapezoid inequality of
absolutely continuous functions and Applications}

\author[M.W. Alomari]{Mohammad W. Alomari}

\address{Department of Mathematics,
Faculty of Science and Information Technology, Irbid National
University, 21110 Irbid, Jordan} \email{mwomath@gmail.com}

\date{\today}
\subjclass[2010]{26D15, 26D10}

\keywords{Hermite--Hadamard inequality, Trapezoid inequality}

\begin{abstract}
In this paper, new upper and lower bounds for the Trapezoid
inequality of absolutely continuous functions are obtained.
Applications to some special means are provided as well.
\end{abstract}

\maketitle

\section{Introduction}

Let $f:I\subseteq \mathbb{R}\rightarrow \mathbb{R}$ be a convex function
defined on the interval $I$ of real numbers and $a,\,b\in I$, with $a<b$.
The following inequality, known as \textit{Hermite--Hadamard inequality}
for convex functions, holds:
\begin{equation}
f\left( {\frac{{a+b}}{2}}\right) \leq \int_{a}^{b}{f\left( x\right) dx}\leq
\frac{{f\left( a\right) +f\left( b\right) }}{2}.  \label{1}
\end{equation}
On the other hand, a very related inequality to (\ref{1}) was known in literature as the `\emph{Trapezoid inequality}', which states that:  if $f:[a,b]\to \mathbb{R}$ is twice differentiable such that $
\left\| {f''} \right\|_\infty  : = \mathop {\sup }\limits_{t \in \left( {a,b} \right)} \left| {f''\left( t \right)} \right| < \infty
$, then
\begin{align}
\label{trapineq} \left| {\int_a^b {f\left( x \right)dx}  - \left(
{b - a} \right)\frac{{f\left( a \right) + f\left( b \right)}}{2}}
\right| \le \frac{{\left( {b - a} \right)^3 }}{{12}}\left\| {f''}
\right\|_\infty.
\end{align}
In recent years many authors have established several inequalities connected
to the Hermite-Hadamard's inequality. For recent results, refinements,
counterparts, generalizations and new Hermite-Hadamard-type inequalities under various assumptions for the functions involved the reader may refer to
\cite{M1} -- \cite{9b} and the references therein.

In this paper, new upper and lower bounds for the Trapezoid
inequality of absolutely continuous functions are established.
Applications to some special means are provided as well.

\section{The result}
\begin{theorem}
\label{thm1}Let $f:I\subset \mathbb{R}\rightarrow \mathbb{R}_+$ be an absolutely
continuous mapping on $I^{\circ },$ the interior of the interval
$I$, where $a,b\in I$ with $a<b$. Then there exists $x\in (a,b)$ such that the double inequality
\begin{multline}
\frac{{\left( {b - a} \right)^2 }}{{2M^2 }}\left[ {\frac{1}{{b - a}}\int_a^b {f\left( s \right)ds}  - \frac{{M^2 }}{{\left( {x - a} \right)\left( {b - x} \right)}}f\left( x \right)} \right]
\\
\le\frac{{f\left( b \right) + f\left( a \right)}}{2} - \frac{1}{{b - a}}\int_a^b {f\left( s \right)ds}
\\
\le \frac{{\left( {b - a} \right)^2 }}{{2m^2 }}\left[ {\frac{1}{{b - a}}\int_a^b {f\left( s \right)ds}  - \frac{{m^2 }}{{\left( {x - a} \right)\left( {b - x} \right)}}f\left( x \right)} \right]\label{eq2.1}
\end{multline}
holds, where
\begin{align*}
M: = \left[ {\frac{{b - a}}{2} + \left| {x - \frac{{a + b}}{2}} \right|} \right],
\end{align*}
and
\begin{align*}
m: =  \left[ {\frac{{b - a}}{2} - \left| {x - \frac{{a + b}}{2}} \right|} \right].
\end{align*}
\end{theorem}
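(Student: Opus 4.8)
The plan is to recast \eqref{eq2.1} so that the free point $x$ appears directly, and then to pin $x$ down by an intermediate-value argument comparing the graph of $f$ with a parabola. Put $A:=\frac{1}{b-a}\int_a^b f(s)\,ds$; note $f$ is continuous on $[a,b]$ and that $M=\max\{x-a,b-x\}$, $m=\min\{x-a,b-x\}$, so $Mm=(x-a)(b-x)$, $\frac{M^2}{(x-a)(b-x)}=\frac Mm$ and $\frac{m^2}{(x-a)(b-x)}=\frac mM$. Adding $\frac{(b-a)^2f(x)}{2(x-a)(b-x)}$ to all three members of \eqref{eq2.1} turns it into the equivalent assertion
\begin{equation*}
\frac{(b-a)^2A}{2M^2}\ \le\ \Psi(x)\ \le\ \frac{(b-a)^2A}{2m^2},\qquad \Psi(x):=\frac{f(a)+f(b)}{2}-A+\frac{(b-a)^2f(x)}{2(x-a)(b-x)}.
\end{equation*}
Since $0<m\le M$ and $A\ge0$, the number $\frac{(b-a)^2A}{2(x-a)(b-x)}=\frac{(b-a)^2A}{2Mm}$ always lies between the two displayed bounds; hence it is enough to find $x\in(a,b)$ with $\Psi(x)$ caught between them, and in particular any $x$ with $\Psi(x)=\frac{(b-a)^2A}{2(x-a)(b-x)}$ works.

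A short computation shows this last equation holds precisely when $f(x)=g(x)$, where $g(x):=A-\frac{2(x-a)(b-x)}{(b-a)^2}\bigl(\frac{f(a)+f(b)}{2}-A\bigr)$ is a parabola with $g(a)=g(b)=A$; moreover, using $\int_a^b(x-a)(b-x)\,dx=\frac{(b-a)^3}{6}$, one has $\int_a^b\bigl(f(s)-g(s)\bigr)\,ds=\frac{b-a}{3}\bigl(\frac{f(a)+f(b)}{2}-A\bigr)$. If $A=0$ then $f\equiv0$ and \eqref{eq2.1} reads $0\le0\le0$, so I may assume $A>0$. If $f-g$ has a zero in $(a,b)$, that zero is the desired $x$. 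If not, $f-g$ is continuous and nonvanishing on $(a,b)$, hence of constant sign there, and by the last identity this sign is the nonzero sign of $\frac{f(a)+f(b)}{2}-A$; two sub-cases remain.

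Suppose first $\frac{f(a)+f(b)}{2}>A$. Then $f(a)+f(b)>2A>0$, so $f(a)>0$ (or $f(b)>0$, which is symmetric after reflecting the interval). As $x\to a^{+}$ one has $M=b-x$, so $\frac{(b-a)^2A}{2M^2}\to\frac A2$, while $\Psi(x)\to+\infty$ and $(x-a)^2\Psi(x)\to0$; thus $\frac{(b-a)^2A}{2M^2}\le\Psi(x)\le\frac{(b-a)^2A}{2(x-a)^2}=\frac{(b-a)^2A}{2m^2}$ for all $x$ sufficiently close to $a$. Suppose instead $\frac{f(a)+f(b)}{2}<A$. Then $f<g$ on $(a,b)$; from $\int_a^b\frac{(b-a)^2f(s)}{2(s-a)(b-s)}\cdot\frac{2(s-a)(b-s)}{(b-a)^3}\,ds=A$ and $\int_a^b\frac{2(s-a)(b-s)}{(b-a)^3}\,ds=\frac13$ there must be $x_0\in(a,b)$ with $\frac{(b-a)^2f(x_0)}{2(x_0-a)(b-x_0)}\ge3A$, whence $\Psi(x_0)\ge\frac{f(a)+f(b)}{2}+2A\ge2A$; meanwhile $f(x_0)<g(x_0)$ forces $\Psi(x_0)<\frac{(b-a)^2A}{2(x_0-a)(b-x_0)}=\frac{(b-a)^2A}{2Mm}$, and since $M\ge\frac{b-a}{2}$ and $Mm\ge m^2$ this gives $\frac{(b-a)^2A}{2M^2}\le2A\le\Psi(x_0)<\frac{(b-a)^2A}{2m^2}$.

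In every case I obtain $x\in(a,b)$ with $\frac{(b-a)^2A}{2M^2}\le\Psi(x)\le\frac{(b-a)^2A}{2m^2}$; subtracting $\frac{(b-a)^2f(x)}{2(x-a)(b-x)}$ from the three members and rewriting the outer ones through $Mm=(x-a)(b-x)$ gives back \eqref{eq2.1}. I expect the hard part to be the sub-case $\frac{f(a)+f(b)}{2}<A$: there the graph of $f$ may miss the parabola $g$ entirely, so instead of an intersection point one has to extract $x_0$ from the weighted-mean identity for $\frac{(b-a)^2f(x)}{2(x-a)(b-x)}$.
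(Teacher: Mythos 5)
Your argument is correct, but it takes a genuinely different route from the paper's. The paper introduces the auxiliary function $F(t)=\frac{1}{b-t}\int_t^b f(s)\,ds-\frac{1}{t-a}\int_a^t f(s)\,ds$, applies Lagrange's mean value theorem on $[a,b]$ to produce the point $x$ of \eqref{eq2.2}, identifies $\frac{F(b)-F(a)}{b-a}$ with $\frac{2}{b-a}$ times the trapezoid defect and $F'(x)$ with $\frac{1}{(b-x)^2}\int_x^b f+\frac{1}{(x-a)^2}\int_a^x f-\frac{(b-a)f(x)}{(x-a)(b-x)}$, and then squeezes the two weighted integrals between $\frac{1}{M^2}\int_a^b f$ and $\frac{1}{m^2}\int_a^b f$ using $m\le x-a,\;b-x\le M$ and $f\ge 0$. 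You instead normalize \eqref{eq2.1} to $\frac{(b-a)^2A}{2M^2}\le\Psi(x)\le\frac{(b-a)^2A}{2m^2}$ and locate $x$ by comparing $f$ with the parabola $g$, falling back on endpoint asymptotics and a weighted-mean extraction when the graphs do not meet; I checked the individual steps (the identity $Mm=(x-a)(b-x)$, the integral $\int_a^b(f-g)=\frac{b-a}{3}\bigl(\frac{f(a)+f(b)}{2}-A\bigr)$, the limits as $x\to a^+$, and the existence of $x_0$ with $\frac{(b-a)^2f(x_0)}{2(x_0-a)(b-x_0)}\ge 3A$) and they all hold. The trade-off: the paper's proof is much shorter and, crucially, pins $x$ down as the specific mean-value point of \eqref{eq2.2}, which is exactly what the applications in Section 3 compute (e.g.\ $x=\sqrt{ab}$ for $f(s)=1/s^2$); your proof only produces \emph{some} $x\in(a,b)$, so it establishes the theorem as literally stated but would not by itself support those later identifications. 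In exchange, your route avoids differentiating $F$ altogether, uses only continuity and nonnegativity of $f$ rather than absolute continuity, and gives the extra structural information that whenever $f$ intersects the explicit parabola $g$ the intersection point already serves as $x$.
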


\begin{proof}
Consider the function $F:[a,b] \to (0,\infty)$ defined by
\begin{align*}
 F\left( t \right) = \frac{1}{{b - t}}\int_t^b {f\left( s \right)ds}  - \frac{1}{{t - a}}\int_a^t {f\left( s \right)ds}
 \end{align*}
for all $t \in [a,b]$. Since $f$ is absolutely continuous on
$[a,b]$, then it is easy to see that $F$ is differentiable on
$(a,b)$. So that by the Mean-Value Theorem there exits $x \in
(a,b)$ such that
\begin{align}
F'\left( x \right) = \frac{{F\left( b \right) - F\left( a \right)}}{{b - a}}.\label{eq2.2}
\end{align}
Simple calculations yield that
\begin{align*}
F'\left( x \right) =\frac{1}{{\left( {b - x} \right)^2 }}\int_x^b {f\left( s \right)ds}  + \frac{1}{{\left( {x - a} \right)^2 }}\int_a^x {f\left( s \right)ds}  - \frac{{\left( {b - a} \right)f\left( x \right)}}{{\left( {x - a} \right)\left( {b - x} \right)}}
\end{align*}
and
\begin{align*}
\frac{{F\left( b \right) - F\left( a \right)}}{{b - a}} &= \frac{1}{{b - a}}\left\{ {f\left( b \right) - \frac{1}{{b - a}}\int_a^b {f\left( s \right)ds}  - \left[ {\frac{1}{{b - a}}\int_a^b {f\left( s \right)ds}  - f\left( a \right)} \right]} \right\}
\\
&=\frac{1}{{b - a}}\left\{ {f\left( a \right) + f\left( b \right) - \frac{2}{{b - a}}\int_a^b {f\left( s \right)ds} } \right\}
\end{align*}
therefore, we have
\begin{align}
 F'\left( x \right) &=\frac{1}{{\left( {b - x} \right)^2 }}\int_x^b {f\left( s \right)ds}  + \frac{1}{{\left( {x - a} \right)^2 }}\int_a^x {f\left( s \right)ds}  - \frac{{\left( {b - a} \right)f\left( x \right)}}{{\left( {x - a} \right)\left( {b - x} \right)}}
\nonumber\\
&= \frac{2}{{b - a}}\left[ {\frac{{f\left( b \right) + f\left( a \right)}}{2} - \frac{1}{{b - a}}\int_a^b {f\left( s \right)ds} } \right].\label{eq2.3}
\end{align}
Now, for $x\in(a,b)$, we set
\begin{align*}
M: = \max \left\{ {x - a,b - x} \right\} = \left[ {\frac{{b - a}}{2} + \left| {x - \frac{{a + b}}{2}} \right|} \right],
\end{align*}
and
\begin{align*}
m: = \min \left\{ {x - a,b - x} \right\} = \left[ {\frac{{b - a}}{2} - \left| {x - \frac{{a + b}}{2}} \right|} \right],
\end{align*}
so that, since $f$ is positive we have
\begin{align*}
\frac{1}{{M^2 }}\int_a^x {f\left( s \right)ds}  \le \frac{1}{{\left( {x - a} \right)^2 }}\int_a^x {f\left( s \right)ds}  \le \frac{1}{{m^2 }}\int_a^x {f\left( s \right)ds}
\end{align*}
and
\begin{align*}
\frac{1}{{M^2 }}\int_x^b {f\left( s \right)ds}  \le \frac{1}{{\left( {b - x} \right)^2 }}\int_x^b {f\left( s \right)ds}  \le \frac{1}{{m^2 }}\int_x^b {f\left( s \right)ds}
\end{align*}
By adding the above two inequalities we get
\begin{align}
\frac{1}{{M^2 }}\int_a^b {f\left( s \right)ds}  \le \frac{1}{{\left( {b - x} \right)^2 }}\int_x^b {f\left( s \right)ds}  + \frac{1}{{\left( {x - a} \right)^2 }}\int_a^x {f\left( s \right)ds}  \le \frac{1}{{m^2 }}\int_a^b {f\left( s \right)ds} \label{eq2.4}
\end{align}
and by (\ref{eq2.3}) and (\ref{eq2.4}) we get
\begin{align*}
&\frac{1}{{M^2 }}\int_a^b {f\left( s \right)ds}  - \frac{{\left(
{b - a} \right)f\left( x \right)}}{{\left( {x - a} \right)\left(
{b - x} \right)}}
\\
&\le \frac{2}{{b - a}}\left[ {\frac{{f\left( b \right) + f\left( a
\right)}}{2} - \frac{1}{{b - a}}\int_a^b {f\left( s \right)ds} }
\right]
\\
&=\frac{1}{{\left( {b - x} \right)^2 }}\int_c^b {f\left( s \right)ds}  + \frac{1}{{\left( {x - a} \right)^2 }}\int_a^x {f\left( s \right)ds}  - \frac{{\left( {b - a} \right)f\left( x \right)}}{{\left( {x - a} \right)\left( {b - x} \right)}}
\\
&\le \frac{1}{{m^2 }}\int_a^b {f\left( s \right)ds} - \frac{{\left( {b - a} \right)f\left( x \right)}}{{\left( {x - a} \right)\left( {b - x} \right)}}.
\end{align*}
Hence, by multiplying the above inequality by the quantity $\frac{{b - a}}{{2}}$ we get
\begin{align*}
&\frac{{b - a}}{2}\left[ {\frac{1}{{M^2 }}\int_a^b {f\left( s \right)ds}  - \frac{{\left( {b - a} \right)f\left( x \right)}}{{\left( {x - a} \right)\left( {b - x} \right)}}} \right]
\\
&\le\frac{{f\left( b \right) + f\left( a \right)}}{2} - \frac{1}{{b - a}}\int_a^b {f\left( s \right)ds}
\\
&\le \frac{{b - a}}{2}\left[ {\frac{1}{{m^2 }}\int_a^b {f\left( s
\right)ds}  - \frac{{\left( {b - a} \right)f\left( x
\right)}}{{\left( {x - a} \right)\left( {b - x} \right)}}}
\right].
\end{align*}
 Rearranging the terms we may write,
\begin{align*}
&\frac{{\left( {b - a} \right)^2 }}{{2M^2 }}\left[ {\frac{1}{{b - a}}\int_a^b {f\left( s \right)ds}  - \frac{{M^2 }}{{\left( {x - a} \right)\left( {b - x} \right)}}f\left( x \right)} \right]
\\
&\le\frac{{f\left( b \right) + f\left( a \right)}}{2} - \frac{1}{{b - a}}\int_a^b {f\left( s \right)ds}
\\
&\le \frac{{\left( {b - a} \right)^2 }}{{2m^2 }}\left[ {\frac{1}{{b - a}}\int_a^b {f\left( s \right)ds}  - \frac{{m^2 }}{{\left( {x - a} \right)\left( {b - x} \right)}}f\left( x \right)} \right],
\end{align*}
for some $x \in (a,b)$; which proves the inequality (\ref{eq2.1}).
\end{proof}

Here, its convenient to note that
\begin{align}
0\le\Delta &:=\text{The right-hand side of (\ref{eq2.1})} - \text{The left-hand side of (\ref{eq2.1})}
\nonumber\\
&=\frac{{\left( {b - a} \right)^2 }}{{2m^2 }}\left[ {\frac{1}{{b - a}}\int_a^b {f\left( s \right)ds}  - \frac{{m^2 }}{{\left( {x - a} \right)\left( {b - x} \right)}}f\left( x \right)} \right]
\nonumber\\
&\qquad- \frac{{\left( {b - a} \right)^2 }}{{2M^2 }}\left[ {\frac{1}{{b - a}}\int_a^b {f\left( s \right)ds}  - \frac{{M^2 }}{{\left( {x - a} \right)\left( {b - x} \right)}}f\left( x \right)} \right]\nonumber
\\
&= \left( {\frac{{M^2  - m^2 }}{{2m^2 M^2 }}} \right)\left( {b - a} \right)\int_a^b {f\left( s \right)ds},
\end{align}
thus, it is clear that $\left( {\frac{{M^2  - m^2 }}{{2m^2 M^2 }}} \right) \ge0 $ and so that the difference $\Delta\ge0$ iff $f(t)\ge0$ $\forall t \in [a,b]$.

Finally, we note that another interesting form of the inequality (\ref{eq2.1}) may be deduced by rewriting the terms of (\ref{eq2.1}), to get:
\begin{align}
&\frac{{2M^2 \left( {b - a} \right)}}{{\left( {2M^2  + \left( {b - a} \right)^2 } \right)}}\left[ {\frac{{\left( {b - a} \right)^2 }}{{2\left( {x - a} \right)\left( {b - x} \right)}}f\left( x \right) + \frac{{f\left( a \right) + f\left( b \right)}}{2}} \right]
\nonumber\\
&\ge \int_a^b {f\left( s \right)ds}
\label{eq2.6}\\
&\ge \frac{{2m^2 \left( {b - a} \right)}}{{\left( {2m^2  + \left( {b - a} \right)^2 } \right)}}\left[ {\frac{{\left( {b - a} \right)^2 }}{{2\left( {x - a} \right)\left( {b - x} \right)}}f\left( x \right) + \frac{{f\left( a \right) + f\left( b \right)}}{2}} \right],\nonumber
\end{align}
and so that, we have
\begin{align}
0 &\le \int_a^b {f\left( s \right)ds} - \frac{{2m^2 \left( {b - a} \right)}}{{\left( {2m^2  + \left( {b - a} \right)^2 } \right)}}\Psi _f \left( {a,b;x} \right)
\label{eq2.7}\\
&\le \left[ {\frac{{2M^2 \left( {b - a} \right)}}{{\left( {2M^2  + \left( {b - a} \right)^2 } \right)}}-\frac{{2m^2 \left( {b - a} \right)}}{{\left( {2m^2  + \left( {b - a} \right)^2 } \right)}}} \right]\Psi _f \left( {a,b;x} \right)\nonumber
\end{align}
where,
\begin{align*}
\Psi _f \left( {a,b;x} \right): = \frac{{\left( {b - a} \right)^2 }}{{2\left( {x - a} \right)\left( {b - x} \right)}}f\left( x \right) + \frac{{f\left( a \right) + f\left( b \right)}}{2}
\end{align*}
for some $x \in (a,b)$.

In an interesting particular case, let $\mathcal{F}$ be the set of
all functions $f:I\subset \mathbb{R}\rightarrow \mathbb{R}_+$ that
satisfy the assumptions of Theorem \ref{thm1} such that the
required $x \in (a,b)$ is $x= \frac{a+b}{2}$, (in this case we
have $M=m=\frac{b-a}{2}$) thus from (\ref{eq2.7}) every such $f$
satisfies that
\begin{align}
\int_a^b {f\left( s \right)ds}  = \frac{1}{3}\left( {b - a}
\right)\left[ {2f\left( {\frac{{a + b}}{2}} \right) +
\frac{{f\left( a \right) + f\left( b \right)}}{2}}
\right],\label{eq2.8}
\end{align}
where,
\begin{align*}
\Psi _f \left( {a,b;{\textstyle{{a + b} \over 2}}}
\right)=2f\left( {\frac{{a + b}}{2}} \right) + \frac{{f\left( a
\right) + f\left( b \right)}}{2}.
\end{align*}
 On the other hand, it is well-known that the error term in Simpson's
 quadrature rule (\ref{Simpson1}) involves a fourth derivatives, however using
 the above
 observation and for every $f\in \mathcal{F}$;
$\int_a^b {f\left( s \right)ds}$ can be evaluated `exactly' using
the Simpson formula (\ref{eq2.8}) with no errors, and without
going through its higher derivatives which may not exists or hard
to find; as in the classical result.
\begin{align}
\label{Simpson1}\int\limits_a^b {f\left( x \right)dx}
=\frac{b-a}{3}\left[ {\frac{{f\left( a \right) + f\left( b
\right)}}{2} + 2 f\left( {\frac{{a + b}}{2}} \right)} \right]
 + \frac{\left( {b - a} \right)^5}{{90}}\left\|
{f^{\left( 4 \right)} } \right\|_\infty.
\end{align}

\section{Applications to means}

A function $M: \mathbb{R}^2_+\to \mathbb{R}_+ $,
is called a Mean function if it has the following properties:
\begin{enumerate}
\item Homogeneity: $M\left( {ax,ay} \right) = aM\left( {x,y}
\right)$, for all $a > 0$,

\item Symmetry : $M\left( {x,y} \right) = M\left( {y,x} \right)$,

\item Reflexivity : $M\left( {x,x} \right) = x$,

\item Monotonicity: If $x\le x'$ and $y \le y'$, then $M\left(
{x,y} \right) \le M\left( {x',y'} \right)$,

\item Internality: $\min\{{x,y}\} \le M\left( {x,y} \right) \le
\max\{{x,y}\}$.
\end{enumerate}
We shall consider the means for arbitrary positive real numbers
$\alpha, \beta$ $(\alpha \ne \beta)$, see \cite{Bullen1}--\cite{Bullen2}.
We take
\begin{enumerate}
\item The arithmetic mean :
$$A := A\left( {\alpha ,\beta } \right) = \frac{{\alpha  + \beta
}}{2},\,\,\,\,\,\alpha ,\beta \in  \mathbb{R}_+.$$

\item The geometric mean :
$$G: = G\left( {\alpha ,\beta } \right) = \sqrt {\alpha \beta },\,\,\,\,\,\alpha ,\beta \in \mathbb{R}_+$$

\item The harmonic mean :
$$H: = H\left(
{\alpha ,\beta } \right) = \frac{2}{{\frac{1}{\alpha } +
\frac{1}{\beta }}},\,\,\,\,\,\alpha ,\beta \in \mathbb{R}_+
 - \left\{ 0 \right\}.$$

\item The power mean : $$M_r \left( {\alpha ,\beta } \right) =
\left( {\frac{{\alpha ^r  + \beta ^r }}{2}} \right)^{{\textstyle{1
\over r}}},\,\,\,\,\, r \ge 1,\,\, \alpha ,\beta \in
\mathbb{R}_+$$

\item The identric mean:
$$I\left( {\alpha ,\beta } \right) = \left\{ \begin{array}{l}
 \frac{1}{e}\left( {\frac{{\beta ^\beta  }}{{\alpha ^\alpha  }}} \right)^{{\textstyle{1 \over {\beta  - \alpha }}}} ,\,\,\,\,\,\,\,\alpha  \ne \beta  \\
 \alpha ,\,\,\,\,\,\,\,\,\,\,\,\,\,\,\,\,\,\,\,\,\,\,\,\,\,\,\,\,\,\,\,\,\,\alpha  = \beta  \\
 \end{array} \right.,\,\,\,\,\,\alpha ,\beta>0$$

\item The logarithmic mean :
$$L := L\left( {\alpha ,\beta } \right) = \frac{{\alpha  - \beta }}{{\ln
\left| \alpha  \right| - \ln \left| \beta  \right|}},\,\,\left|
\alpha  \right| \ne \left| \beta  \right|,\,\,\alpha, \beta  \ne
0,\,\,\alpha ,\beta \in \mathbb{R}_+ .$$

\item The generalized log-mean:
$$L_p:=L_p \left( {\alpha ,\beta } \right) = \left[
{\frac{{\beta ^{p + 1}  - \alpha ^{p + 1} }}{{\left( {p + 1}
\right)\left( {\beta  - \alpha } \right)}}} \right]^{{\textstyle{1
\over p}}} ,\,p \in \mathbb{R}\backslash \left\{ { -1 ,0}
\right\},\,\alpha ,\beta >0.
$$
\end{enumerate}
It is well known that $L_p$ is monotonic nondecreasing over $p \in
\mathbb{R}$, with $L_{-1} := L$ and $L_0 := I$. In particular, we
have the following inequality $H \le G \le L \le I \le A$.\\

As a direct example on the inequality (\ref{eq2.1}), consider $f:[a,b] \subset(0,\infty) \to (0,\infty)$ given by $f\left( s \right) = \frac{1}{{s^2 }}$, , $s \in [a,b]$, it is easy to see that
$F\left( t \right) = \frac{{b - a}}{{ab}}\cdot\frac{1}{{t^2 }}$, and so that the required $x \in (a,b)$ is $x =
\sqrt {ab}
:=G\left( {a,b} \right)$.
Applying (\ref{eq2.1}), we get
\begin{multline}
\frac{{\left( {b - a} \right)^2 }}{{2M^2 }}\left[ {1 - \frac{{M^2 }}{{\left( {G\left( {a,b} \right) - a} \right)\left( {b - G\left( {a,b} \right)} \right)}}} \right]
\le \frac{G^2 \left( {a,b} \right)}{{H\left( {a^2,b^2} \right)}} - 1
\\
\le \frac{{\left( {b - a} \right)^2 }}{{2m^2 }}\left[ {1 - \frac{{m^2 }}{{\left( {G\left( {a,b} \right) - a} \right)\left( {b - G\left( {a,b} \right)} \right)}}} \right],
\end{multline}
where,
$M: = \left[ {\frac{{b - a}}{2} + \left| {G\left( {a,b} \right) - A\left( {a,b} \right)} \right|} \right]$, and $m: = \left[ {\frac{{b - a}}{2} - \left| {G\left( {a,b} \right) - A\left( {a,b} \right)} \right|} \right]$.\\

In general, the reader may check that it is not easy to find the value of $x$ that satisfies the inequality (\ref{eq2.1}). For example, we consider $f:[a,b] \subset(0,\infty) \to (0,\infty)$ given by

(1-) $f\left( s \right) = \frac{1}{s}$,  $s \in [a,b] \subset (0,\infty)$, so that
$F\left( t \right) = \ln \left( {\frac{{\left( {b/t} \right)^{1/\left( {b - t} \right)} }}{{\left( {t/a} \right)^{1/\left( {t - a} \right)} }}} \right)$ .
Applying (\ref{eq2.1}), we get
\begin{multline}
\frac{{\left( {b - a} \right)^2 }}{{2M^2 }}\left[ {\frac{1}{{L\left( {a,b} \right)}} - \frac{{M^2 }}{{\left( {x - a} \right)\left( {b - x} \right)}}\frac{1}{x}} \right]
\le \frac{1}{{H\left( {a,b} \right)}}
 - \frac{1}{{L\left( {a,b} \right)}}
\\
\le \frac{{\left( {b - a} \right)^2 }}{{2m^2 }}\left[ {\frac{1}{{L\left( {a,b} \right)}} - \frac{{m^2 }}{{\left( {x - a} \right)\left( {b - x} \right)}}\frac{1}{x}} \right]
\end{multline}
where $x\in (a,b)$ satisfying the equation (\ref{eq2.2}), and $m,M$ are defined above.\\

(2-) $f\left( s \right) = \ln(s)$,  $s \in [a,b] \subset (0,\infty)$, so that
$$F\left( t \right) =
\ln \left( {\frac{{\left( {b^b /t^t } \right)^{1/\left( {b - t} \right)} }}{{\left( {t^t /a^a } \right)^{1/\left( {t - a} \right)} }}} \right)= \ln \left( {\frac{{I\left( {t,b} \right)}}{{I\left( {a,t} \right)}}} \right)
,\,\,\,\,t \in [a,b]$$  where $I(\cdot,\cdot)$ is the identric mean. Now,
applying (\ref{eq2.1}), we get
\begin{multline}
\frac{{\left( {b - a} \right)^2 }}{{2M^2 }}\left[ {\ln I\left( {a,b} \right) - \frac{{M^2 \ln x}}{{\left( {x - a} \right)\left( {b - x} \right)}}} \right]
\le \ln G\left( {a,b} \right) - \ln I\left( {a,b} \right)
\\
\le \frac{{\left( {b - a} \right)^2 }}{{2m^2 }}\left[ {\ln I\left( {a,b} \right) - \frac{{m^2 \ln x}}{{\left( {x - a} \right)\left( {b - x} \right)}}} \right]
\end{multline}
where $x\in (a,b)$ satisfying the equation (\ref{eq2.2}), and $m,M$ are defined above.\\

(3-) $f\left( s \right) = s^p$, $s \in [a,b] \subset (0,\infty)$ and $p \in \mathbb{R}\backslash \left\{ { -1 ,0}
\right\}$, so that
$$F\left( t \right) =
\frac{{b^{p + 1}  - t^{p + 1} }}{{\left( {b - t} \right)\left( {p + 1} \right)}} - \frac{{t^{p + 1}  - a^{p + 1} }}{{\left( {t - a} \right)\left( {p + 1} \right)}} = L_p^p \left( {t,b} \right) - L_p^p \left( {a,t} \right)
,\,\,\,\,t \in [a,b]$$where $L_p^p(\cdot,\cdot)$ is the generalized logarithmic mean. Applying (\ref{eq2.1}), we get
\begin{multline}
\frac{{\left( {b - a} \right)^2 }}{{2M^2 }}\left[ {L_p^p \left( {a,b} \right) - \frac{{M^2 x^p }}{{\left( {x - a} \right)\left( {b - x} \right)}}} \right]
\le  M_p^p\left( {a,b} \right) -
L_p^p \left( {a,b} \right)
\\
\le \frac{{\left( {b - a} \right)^2 }}{{2m^2 }}\left[ {L_p^p \left( {a,b} \right) - \frac{{m^2 x^p }}{{\left( {x - a} \right)\left( {b - x} \right)}}} \right]
\end{multline}
where $x\in (a,b)$ satisfying the equation (\ref{eq2.2}), and $m,M$ are defined above.

\centerline{}

\centerline{}

\end{document}